\newtheorem{theorem}{Theorem}[section]
\newtheorem{proposition}[theorem]{Proposition}
\theoremstyle{definition}
\newcommand{\p}{{\mathbb P}}
\newcommand{\rk}{\operatorname{rk}}
\def\geq{\geqslant}
\def\leq{\leqslant}
\begin{document}
 
\title[Enumeration of Terracini schemes]{Enumeration of Terracini schemes}

\author{Ciro Ciliberto}
\address{Dipartimento di Matematica, Universit\`a di Roma Tor Vergata, Via O. Raimondo 00173, Roma, Italia}
\email{cilibert@mat.uniroma2.it}

\subjclass{Primary 14N10; Secondary 14N07}
 
\keywords{Terracini schemes, secant varieties.}
 
\maketitle

\medskip

 \begin{abstract} In this note we outline a way of computing the expected lenght of the Terracini scheme of a curve, when this scheme is expected to be finite and we give a closed formula for curves in $\p^4$. We also discuss the widely open case of varieties of higher dimension. 
 \end{abstract} 
 
 \section*{Introduction}
 
 Let $X\subset \p^r$ be an irreducible, projective variety, that we assume to be smooth and non--degenerate. Let $p_1,\ldots, p_h$ be $h\geq 2$ general points of $X$. 
 The famous \emph{Terracini's lemma} says that the span of the union of the tangent spaces to $X$ at  $p_1,\ldots, p_h$ is the tangent space to the $(h-1)$--th secant variety ${\rm Sec}_{h-1}(X)$ to $X$ at a general point $x$ of the linear space $\langle p_1,\ldots, p_h\rangle$, that is also a general point of ${\rm Sec}_{h-1}(X)$. The expected dimension of ${\rm Sec}_{h-1}(X)$, and therefore of $T_{{\rm Sec}_r(X),x}$, is $\min\{ r, h(n + 1) - 1 \}$, and the dimension $\sigma_{h-1}(X)$ of ${\rm Sec}_{h-1}(X)$ is bounded above by the expected dimension. If  $\sigma_{h-1}(X)<\min\{ r, h(n + 1) - 1 \}$,  $X$ is said to be \emph{$(h-1)$--defective}. Curves are never defective, whereas in higher dimension there are a lot of defective varieties.
   
In \cite {BC} the Authors introduced the concept of \emph{Terracini loci} of $X$. According to \cite {BC}, the \emph{$h$--th Terracini locus} of $X$ is the subset of $X_h$, the Hilbert scheme of 0--dimensional schemes of length $h$ in $X$, of $h$--tuples of distinct points $p_1,\ldots, p_h$ of $X$ such that the span of the union of the tangent spaces to $X$ at  $p_1,\ldots, p_h$ is strictly smaller than $\sigma_{h-1}(X)$. In particular, if $X$ is not $(h-1)$--defective, the $h$--th Terracini locus of $X$ is the set of distinct $h$--tuples of points $p_1,\ldots, p_h$ of $X$ such that
$$
\dim \Big (\langle \bigcup _{i=1}^h T_{X,p_i}  \rangle   \Big)<\min\{ r, h(n + 1) - 1 \}.
$$
One can actually  give a more formal definition of \emph{Terracini scheme} in $X_h$ (see \S \ref {sec:curves} for the case of curves).  

In this paper we look at the case of non--$(h-1)$--defective varieties $X$ for which the $h$--th Terracini scheme  is expected to be finite and we consider the problem of computing the expected length this scheme.  In \S \ref {sec:curves}  we outline a way of solving this problem in the case of curves, and we provide a closed formula for the aforementioned length in the case of curves in $\p^4$.  In \S \ref {sec:problem} we briefly discuss the case of higher dimensional varieties, which is widely open.
\medskip 

{\bf Acknowledgement}: This problem arose in the course of a problem session during the Workshop ``Geometry of secants'' at the IMPAM of Warsaw in October 2021. The author thanks the organizers and the participants for the very stimulating and nice atmosphere. The author also thanks Paolo Aluffi for interesting discussions about the topic of this note. 

The author is a member of GNSAGA of the Istituto Nazionale di Alta Matematica ``F. Severi''.  \medskip

 \section{The curve case}\label{sec:curves}  Let $n\geq 2$ be an integer and let $C\subset \p^{3n-2}$ be a smooth, irreducible, projective, non--degenerate curve of degree $d$ and genus $g$. We will denote by $L$ the hyperplane class bundle on $C$ and by $V\subset H^0(C,L)$ the $(3n-1)$--dimensional vector space corresponding to the embedding of $C$ in $\p^4$.

Let $C_n$ be the $n$--fold symmetric product of $C$. Consider $C\times C_n$ with the two projections $\pi_1, \pi_2$ to the first and second factor. Let $D\subset C\times C_n$ be the universal divisor. We define a rank $2n$ vector bundle $\mathcal E_C$ on $C_n$ as follows
 $$
 \mathcal E_C=(\pi_2)_*(\mathcal O_{2D}\otimes \pi_1^*(L))
 $$
 and we can look at the evaluation  map
 $$
 \varphi: V\otimes \mathcal O_{C_n}\longrightarrow \mathcal E_C.
 $$
 We can consider the subscheme ${\rm Terr}(C)$ of $C_n$ of points $E\in C_n$ where $\rk(\varphi)<2n$. The scheme ${\rm Terr}(C)$ is called the \emph{Terracini scheme} of $C$. A divisor $E\in C_n$ belongs to ${\rm Terr}(C)$ if and only if the subspace $V(-2E)\subset V$ of sections $s\in V$ such that ${\rm div}(s)-2E$ is effective, has dimension at most $2n-1$. 
 
In this setting one may expect that ${\rm Terr}(C)$ is finite. If so its lenght will be denoted by $t(C)$, and,  by Porteous formula (see \cite [p. 86] {ACGH}), one has
\begin{equation}\label{eq:t}
t(C)=\left|
\begin{array}{cccc}
c_1 & \ldots&c_{n-1} &c_n\\
c_0 & \ldots &c_{n-2} &c_{n-1}\\
\ldots & \ldots& \ldots  &\ldots\\
0 & \ldots &0&c_{1}\\
\end{array}
\right|
\end{equation}
where $c_i=c_i(\mathcal E_C)$ for $0\leq i\leq n$. To compute $t(C)$ one has  to compute the Chern classes $c_i=c_i(\mathcal E_C)$. To do this, one can proceed as follows. 

By Grothendieck--Riemann--Roch theorem, one has
$$
{\rm td}(C_n) \cdot {\rm ch}(\mathcal E_C)=(\pi_2)_*\big ({\rm td}(C\times C_n)\cdot {\rm ch}(\mathcal O_{2D}\otimes \pi_1^*(L))\big ).
$$
Eliminating the factor ${\rm td}(C_n)$, we find
\begin{equation}\label{eq:*}
 {\rm ch}(\mathcal E_C)=(\pi_2)_*\big ({\rm td}(C)\cdot {\rm ch}(\mathcal O_{2D}\otimes \pi_1^*(L))\big ).
\end{equation}

From the exact sequence
$$
0\longrightarrow \pi_1^*(L)\otimes \mathcal O_{C\times C_2}(-2D)\longrightarrow 
\pi_1^*(L)\longrightarrow \mathcal O_{2D}\otimes \pi_1^*(L)\longrightarrow 0
$$
we deduce
\begin{equation}\label{eq:?}
{\rm ch}(\mathcal O_{2D}\otimes \pi_1^*(L))={\rm ch}(\pi_1^*(L))-{\rm ch}(\pi_1^*(L)\otimes \mathcal O_{C\times C_2}(-2D)).
\end{equation}

Let us denote by $\eta$ the class of the pull back by $\pi_1$ of a point $p$ of $C$, i.e., the class of $\{p\}\times C_n\cong C_n$. Of course, one has
$$
\eta^2=0.
$$
We denote by $\delta$ the class of $D$ and by $\theta$ the pull back  of the theta divisor in the jacobian $J(C)$ via the Abel--Jacobi map $a: C_n\longrightarrow J(C)$. By abuse of notation, $\theta$ will also denote the pull back on $C\times C_n$ via $\pi_2$ of this class. One has
$$
\delta=n\eta+\gamma+x
$$
where $x$ denotes the class of a divisor of the form $p+C_{n-1}\subset C_n$, 
with $p\in C$ (by abuse of notation, $x$ will also denote the pull back of this class on $C\times C_n$ via $\pi_2$), and $\gamma$ is a suitable class on $C\times C_n$ (see \cite [p. 338]{ACGH}), that is such that
$$
\gamma^2=-2\eta \theta, \quad \gamma^3=\eta\gamma=0.
$$  
One has
$$
{\rm ch}(\pi_1^*(L))= e^{d\eta}, \quad {\rm ch}(\pi_1^*(L)\otimes \mathcal O_{C\times C_2}(-2D))=e^{d\eta-2\delta}=e^{(d-2n)\eta-2\gamma} e^{-2x}
$$
and 
$$
{\rm td}(C)=1+(1-g)\eta. 
$$
Hence, from \eqref {eq:?} we get
$$
{\rm td}(C)\cdot {\rm ch}(\mathcal O_{2D}\otimes \pi_1^*(L))=
1+(d-g+1)\eta-\big (1+(d-g-2n+1)\eta-2\gamma -4\eta\theta\big )e^{-2x}.
$$

By \eqref {eq:*}, and taking into account that $\eta\gamma=0$ implies $(\pi_2)_*(\gamma)=0$, we find
\begin{equation}\label{eq:x}
{\rm ch}(\mathcal E_C)=(d-g+1)-(d-g-2n+1 -4\theta)e^{-2x}
\end{equation}

Expanding $e^{-2x}$, taking into account that $x^n=1$ and using Poincar\'e's formula to compute $x^i\theta^{n-i}$ for $0\leq i\leq n$, one computes ${\rm ch}(\mathcal E_C)$. Now
$$
{\rm ch}(\mathcal E_C)=2n+\sum_{i=1}^n \frac {p_n}{n!}
$$
where $p_1=c_1$ and $p_i$ is determined inductively by \emph{Newton's formula}
$$
p_i-c_1p_{i-1} +c_2p_{i-2}+\cdots+(-1)^{i-1}c_{i-1}p_1+(-1)^{i}ic_i=0
$$
(see \cite [p. 56]{F}). From this, in principle one can recursively compute the Chern classes of $\mathcal E_C$ and plugging into \eqref {eq:t} one finds the expression for $t(C)$. 

The general computation is cumbersome and it is not easy to come up with a closed formula for $t(C)$ for  all $n\geq 2$. This is however possible for low values of $n$. For example, one has:

\begin{proposition}\label{prop:main} For $n=2$, i.e., for curves in $	\mathbb P^4$, one has
 $$
 t(C)=2(d-g-3)(d-g-4)+8g(d-5).
 $$ 
  \end{proposition}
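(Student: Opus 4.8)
The plan is to specialize the general machinery developed above to the case $n=2$, where the bookkeeping becomes short enough to carry out explicitly. For $n=2$ the matrix in \eqref{eq:t} is $2\times 2$, so, recalling that $c_0=1$, Porteous' formula reads
$$
t(C)=\left|\begin{array}{cc} c_1&c_2\\ c_0&c_1\end{array}\right|=c_1^2-c_2,
$$
and the whole problem reduces to computing $c_1=c_1(\mathcal E_C)$ and $c_2=c_2(\mathcal E_C)$ as classes on the surface $C_2$ and taking the degree of $c_1^2-c_2$.

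First I would read off the Chern character from \eqref{eq:x} with $n=2$. Since $C_2$ is a surface I may truncate $e^{-2x}=1-2x+2x^2$ after degree $2$ and discard every monomial of degree $\geq 3$ (in particular $\theta x^2$ already vanishes). Substituting into
$$
{\rm ch}(\mathcal E_C)=(d-g+1)-(d-g-3-4\theta)e^{-2x}
$$
and collecting terms by degree yields the three graded pieces ${\rm ch}_0=4=2n$, ${\rm ch}_1=c_1$, and ${\rm ch}_2$. From these I recover $c_1={\rm ch}_1$ directly and, via the identity ${\rm ch}_2=\tfrac12(c_1^2-2c_2)$, the class $c_2=\tfrac12 c_1^2-{\rm ch}_2$. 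The only subtlety at this stage is the sign bookkeeping in the expansion, and making sure the truncation of $e^{-2x}$ is kept consistent throughout.

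Next I would turn these expressions into numbers by intersecting on $C_2$. By Poincar\'e's formula (see \cite[p. 338]{ACGH}) the relevant top intersection numbers are
$$
x^2=1,\qquad x\theta=g,\qquad \theta^2=g(g-1),
$$
which let me evaluate $c_1^2$, the degree of $c_2$, and hence $t(C)=c_1^2-c_2$ as integers. Finally, writing $m=d-g-3$ (so that $d-g-4=m-1$ and $d-5=m+g-2$) and simplifying, the terms should organize themselves as $2m(m-1)+8g(m+g-2)$, which is precisely $2(d-g-3)(d-g-4)+8g(d-5)$, proving the proposition. The main obstacle is purely organizational rather than conceptual: the Chern-character-to-Chern-class conversion combined with the truncation leaves several competing quadratic and linear contributions, and a single misplaced sign or factor in the $\theta x$ term would corrupt the final polynomial, so the delicate point is to track the coefficients of $x^2$, $x\theta$ and $\theta^2$ carefully before collapsing them via the intersection numbers.
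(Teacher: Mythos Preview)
Your proposal is correct and follows essentially the same route as the paper: specialize Porteous' formula to $t(C)=c_1^2-c_2$, expand \eqref{eq:x} with $n=2$ by truncating $e^{-2x}$ at degree $2$, extract $c_1$ and $c_2$ via ${\rm ch}_2=\tfrac12(c_1^2-2c_2)$, and evaluate using $x^2=1$, $x\theta=g$, $\theta^2=g(g-1)$. The paper does exactly this, writing out the intermediate expressions $c_1=2(d-g-3)x+4\theta$ and ${\rm ch}_2=-2(d-g-3)-8g$ explicitly before assembling $c_1^2-c_2$.
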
 
 
\begin{proof} Formula \eqref {eq:t} now reads
\begin{equation}\label{eq:p}
t(C)=c_1^2(\mathcal E_C)-c_2(\mathcal E_C)
\end{equation} 
so we have to compute only $c_1^2(\mathcal E_C)$ and $c_2(\mathcal E_C)$. 

Formula \eqref {eq:x} in this case reads
$$
{\rm ch}(\mathcal E_C)=4+2(d-g-3)x+4\theta-2(d-g-3)x^2-8\theta x.
$$

From this, taking into account that $x^2=1, \theta x=g$, we get that
$$
\begin{aligned}
&c_1(\mathcal E_C)=2(d-g-3)x+4\theta\cr
&\frac {c_1(\mathcal E_C)^2-2c_2(\mathcal E_C)}2=-2(d-g-3)x^2-8\theta x=-2(d-g-3)-8g.
\end{aligned}
$$
Since $\theta^2=g(g-1)$ we deduce that
$$
\begin{aligned}
&c_1(\mathcal E_C)^2=4(d-g-3)^2+16(d-g-3)g+16g(g-1)\cr
&c_2(\mathcal E_C) =2(d-g-3)(d-g-2)+8(d-3-g)g+8g^2.
\end{aligned}
$$
and, plugging into \eqref {eq:p}, we see that the assertion holds. \end{proof}

\section{Open problems}\label{sec:problem}

One can consider Terracini schemes also for higher dimensional varieties and one can consider enumerative problems as in the curve case we treated above.  Indeed, let $n\geq 2$ be an integer, and let $X\subset \p^{n(2m+1)-2}$ be a smooth, projective, irreducible, non--degenerate, not $(n-1)$--defective variety of dimension $m\geq 2$. Let $L$ be the hyperplane class bundle on $X$ and let $V\subset H^0(C,L)$ be the $(n(2m+1)-1)$--dimensional vector space corresponding to the embedding of $X$ in $\p^{n(2m+1)-2}$.

Let $X_n$ be the Hilbert scheme of $0$--dimensional subschemes of length $n$ of $X$. 
Consider $X\times X_n$ with the two projections $\pi_1, \pi_2$ to the first and second factor. Let $D\subset X\times X_n$ be the universal scheme. We define a rank $n(m+1)$ vector bundle $\mathcal E_X$ on $X_n$ as follows
 $$
 \mathcal E_X=(\pi_2)_*(\mathcal O_{X\times X_n}/\mathcal I^2_{D}\otimes \pi_1^*(L)),
 $$
 where $\mathcal I_{D}$ is the ideal sheaf of $D$ in $X\times X_n$.
 One can look at the evaluation  map
 $$
 \varphi: V\otimes \mathcal O_{X_n}\longrightarrow \mathcal E_X
 $$
 and consider  the subscheme ${\rm Terr}(X)$ of $X_n$ of points $Z\in X_n$ where $\rk(\varphi)<n(m+1)$. The scheme ${\rm Terr}(X)$ is called the \emph{Terracini scheme} of $X$. A scheme $Z\in X_n$ belongs to ${\rm Terr}(X)$ if and only if the subspace $V(-2Z)\subset V$, with  $V(-2Z) = V\cap H^0(X, L\otimes \mathcal I^2_{Z,X})$, has dimension at most $n(m+1)-1$. 
 
In this setting, one may expect that ${\rm Terr}(X)$ is finite  and one may want to compute its expected length. In principle, one can follow the same argument outlined in the curve case, but, since the universal scheme $D\subset X\times X_n$ is no longer a divisor, the computations become soon very difficult, even in the simplest case $m=n=2$ of surfaces in $\p^8$. It is therefore an open problem to find formulas for the length of ${\rm Terr}(X)$ in the above cases.

\end{document}